\documentclass[a4paper]{amsart}

\usepackage{color}
\usepackage[dvipdfm]{hyperref}
\usepackage{amssymb}
\usepackage{amsthm}
\usepackage{amscd}
\usepackage{amsmath}
\usepackage[all]{xy}
\usepackage{graphicx}
\pagestyle{plain}
%

\theoremstyle{plain}
\newtheorem{theorem}{Theorem}
\newtheorem*{theorem*}{Theorem}

\newtheorem*{maintheorem*}{Main Theorem}
\newtheorem{proposition}{Proposition}
\newtheorem{corollary}[proposition]{Corollary}
\newtheorem{lemma}[proposition]{Lemma}

\newtheorem*{conjecture*}{Conjecture}

\theoremstyle{definition}
\newtheorem{definition}{Definition}
\newtheorem*{definition*}{Definition}
\newtheorem{example}{Example}
\newtheorem*{example*}{Example}
\newtheorem*{notation*}{Notation}
\newtheorem*{notation-conv*}{Notation and convention}
\newtheorem*{convention*}{Convention}
\newtheorem*{hypothesis*}{Hypothesis}

\theoremstyle{remark}
\newtheorem{remark}{Remark}

%
\newcommand{\ie}{\emph{i.e.}}
\newcommand{\mf}[1]{\mathfrak{#1}}

\newcommand{\myleq}{\leqslant}
\newcommand{\mygeq}{\geqslant}

\newcommand{\ZZ}{{\mathbb Z}}
\newcommand{\CC}{{\mathbb C}}
\newcommand{\Q}{{\mathbb Q}}
\newcommand{\IR}{{\mathbb R}}

\newcommand{\FF}{{\mathbb F}}

\newcommand{\GL}{{\rm GL}}

%
\newcommand{\bm}[1]{\mbox{\boldmath $#1$}} 

\newcommand{\im}{\mathop{\mathrm{im}}\nolimits}


%
\newcommand{\PolyTors}[2]{\Delta_{#1}^{#2}}

%
\newcommand{\Tor}[3]{\mathrm{Tor} ({#1}, {#2}, {#3})}

%
\newcommand{\tensor}[2]{#1 \otimes #2}

%
%
\newcommand{\cbasis}[2][c]{\mathbf{#1}^{#2}}

%
%
\newcommand{\bbasis}[2][b]{\mathbf{#1}^{#2}}
 
%
%
\newcommand{\hbasis}[2][h]{\mathbf{#1}^{#2}}

\newcommand{\bord}{\partial}

\newcommand{\ucover}[1]{\widetilde{#1}}
\newcommand{\abcover}[1]{\hat{#1}}

\newcommand{\hatK}{\hat{K}}
\newcommand{\hatvarphi}{\widehat{\varphi}}
\newcommand{\hatrho}{\widehat{\rho}}


\begin{document}


\title{
The twisted Alexander polynomial for 
finite abelian covers over three manifolds with boundary
}


\author{J\'er\^ome Dubois \and Yoshikazu Yamaguchi}

\address{Institut de Math\'ematiques de Jussieu, 
Universit\'e Paris Diderot--Paris 7, 
UFR de Math\'ematiques, 
Case 7012, B\^atiment Chevaleret,
75205 Paris Cedex 13
France}
\email{dubois@math.jussieu.fr}

\address{Department of Mathematics, 
Tokyo Institute of Technology, 
2-12-1 Ookayama Meguro-ku, Tokyo 152-8551, Japan}
\email{shouji@math.titech.ac.jp}

\date{\today}

\begin{abstract}
  We provide the twisted Alexander polynomials of finite abelian
  covers over three--dimensional manifolds whose boundary is a finite
  union of tori.  This is a generalization of a well--known formula
  for the usual Alexander polynomial of knots in finite cyclic
  branched covers over the three--dimensional sphere.
\end{abstract}

\keywords{Reidemeister torsion; Twisted Alexander polynomial; branched cover; links; Homology orientation}
\subjclass[2000]{Primary: 57M25, Secondary: 57M27}
\maketitle



\section{Introduction}
The classical Alexander polynomial is defined for null--homologous
knots in rational homology spheres, where null--homologous means that
the homology class of a knot is trivial in the first homology group
with $\ZZ$-coefficients of the ambient space.

If the pair $(\abcover{M}, \hatK)$ of a rational homology sphere
$\abcover{M}$ and a null-homologous knot $\hatK$ in $\abcover{M}$ is given
by a finite cyclic branched cover over $S^3$ branched along a knot
$K$, where $\hatK$ is the lift of $K$, then we can compute the
Alexander polynomial of $\hatK$ by using the well--known formula
\[
\Delta_{\hatK}(t) =
\prod_{\xi \in \{x \in \CC\,|\, x^k = 1 \}} \Delta_K(\xi t)
\quad \hbox{up to a factor $\pm t^{a} \,  (a \in \ZZ)$}
\]
where $k$ is the order of the covering transformation group, 
$\xi$ runs all over the $k$-th roots of unity and
$\Delta_K(t)$ is the Alexander polynomial of $K$.
Such formulas have been investigated from the viewpoint of 
Reidemeister torsion for a long time. In particular, V.~Turaev gave a formula for the Alexander polynomial of $\hatK$ in a finite cyclic branched cover over $S^3$,
and a generalization in the case of links in general three--dimensional manifolds
(we refer to~\cite[Theorems~1.9.2 and~1.9.3 ]{Turaev86:torsion_in_knot_theory}). 

The purpose of this paper is to provide the generalization of the
above formula giving the Alexander polynomial of a knot in a finite
cyclic branched cover over $S^3$ to a formula for the twisted
Alexander polynomial of finite abelian covers, which is a special
kind of Reidemeister torsion.  Especially, we also consider the
twisted Alexander polynomial for a link in a three--dimensional manifold from the
viewpoint of Reidemeister torsion in the same way as V.~Turaev.  But
to deal with finite abelian covers beyond finite cyclic covers, we
adopt the approach of J.~Porti in his work~\cite{Porti:2004}. Porti
gave a new proof of Mayberry--Murasugi's formula, which gives the order of the first homology group of
finite abelian branched covers over $S^3$ branched along links, by using
Reidemeister torsion theory.
We call the twisted Alexander polynomial {\it the polynomial torsion} regarded as a kind of Reidemeister torsion.

In this paper, we are interested in the Reidemeister torsion for a
finite sheeted abelian covering. We are mainly intested in link
exteriors in homology three--spheres and their abelian covers.  Our main
theorem (see Theorem~\ref{theorem:torsion_covering}) is stated for an abelian cover
$\abcover{M} \to M$ between two three--dimensional manifolds whose boundary is a finite union of tori as
follows
\[
    \PolyTors{\abcover{M}}{\tensor{\hatvarphi}{\hatrho}}(\bm{t})
    = \epsilon \cdot \prod_{\xi \in \hat{G}} \PolyTors{M}{(\tensor{\varphi}{\rho}) \otimes \xi}(\bm{t})
\]
where $\PolyTors{\abcover{M}}{\tensor{\hatvarphi}{\hatrho}}(\bm{t})$
and $\PolyTors{M}{(\tensor{\varphi}{\rho}) \otimes \xi}(\bm{t})$ are
the signed twisted Alexander polynomials, $\hat{G}$ is the set of
homomorphisms from the covering transformation group $G$ to the
non--zero complex numbers and 
$\epsilon$ is a sign determined by the homology orientations of $\abcover{M}$ and $M$.

To be more precise, we need two homomorphisms of the fundamental group
to define the twisted Alexander polynomial of a manifold.  The symbol
$\varphi$ denotes a surjective homomorphism from $\pi_1(M)$ to a
multiplicative group $\ZZ^n$ and $\rho$ denotes a representation of
$\pi_1(M)$, \ie, a homomorphism from $\pi_1(M)$ to a linear
automorphism group $Aut(V)$ of some vector space $V$ (see
Section~\ref{section:def_polynomial_torsion} for the definition of the
polynomial torsion). In the definition of the twisted Alexander
polynomial of $\abcover{M}$, we use the pull--backs $\hatvarphi$ and
$\hatrho$ of $\varphi$ and $\rho$ to $\pi_1(\abcover{M})$.  The
homomorphisms $\varphi$ and $\xi$ determine variables in the twisted
Alexander polynomial of $\abcover{M}$. In our main theorem, we assume that the composition of $\xi$ with the
quotient homomorphism $\pi_1(M) \to \pi_1(M) / \pi_1(\abcover{M})
\simeq G$ factors through homomorphism $\varphi$ (see
Section~\ref{section:Fox}).

When we choose $\abcover{M} \to M$ as a finite cyclic cover of a knot
exterior $E_K$ of $K$ in $S^3$, $\varphi$ is the abelianization homomorphism
$\pi_1(E_K) \to \pi_1(E_K)/[\pi_1(E_K), \pi_1(E_K)] \simeq \ZZ$ and
$\rho$ is the one--dimensional trivial representation, 
Theorem~\ref{theorem:torsion_covering} reduces to the classical formula for the Alexander
polynomial of $\hatK$, where $\hatK$ is the lift of the knot in the finite
cyclic branched cover over $S^3$
$$
\Delta_{\hatK}(t) =  \prod_{\xi \in \{x \in \CC \,|\, x^k =1\}} \Delta_K(\xi t),
$$
up to a factor $\pm t^a$ $(a \in \ZZ)$, where $k$ is the order of
$\pi_1(M)/ \pi_1(\abcover{M})$.  Our formula also provides the
Alexander polynomial of a link in finite abelian branched covers over
$S^3$ branched along the link.  
 
\section*{Organization}

The outline of the paper is as
follows. Section~\ref{section:preliminaries} deals with some reviews
on the sign--determined Reidemeister torsion for a manifold. In
Section~\ref{section:def_polynomial_torsion}, we give the definition
of the polynomial torsion (the twisted Alexander polynomial) for a
manifold whose boundary is a finite union of tori.  In
Section~\ref{section:Fox}, we consider the polynomial torsion of
finite abelian covering spaces (see
Theorem~\ref{theorem:torsion_covering}).

\section*{Acknowledgments}

The authors gratefully acknowledge the many helpful ideas and
suggestions of J.~Porti during the preparation of the paper.  They
also wish to express their thanks to C.~Blanchet and S.~Friedl for
several helpful comments and for their encouragement.  
Our thanks also go to T.~Morifuji, K.~Murasugi and V.~Maillot
for giving us helpful informations 
and comments regarding this work.
The paper was conceived when J.D. visited
the CRM. He thanks the CRM for hospitality.  The first author (J.D.) is partially supported by the French ANR project ANR-08-JCJC-0114-01.
The second author (Y.Y.) is partially supported by the GCOE program at Graduate School of
Mathematical Sciences, University of Tokyo and Research Fellowships of Japan Society for the promotion of Science for Young Scientists. Y.Y. visited CRM and IMJ
while writing the paper. He thanks CRM and IMJ for their hospitality. The authors also would like to thank the referee for his comments and helpful remarks.

\section{Preliminaries}
\label{section:preliminaries}

\subsection{The Reidemeister torsion}
We review the basic notions and results about the sign--determined
Reidemeister torsion introduced by V. Turaev which are needed in this
paper. Details can be found in Milnor's survey~\cite{Milnor:1966} and
in Turaev's monograph~\cite{Turaev:2002}.

\subsubsection*{Torsion of a chain complex}

Let 
$C_* = ( 
  0 \to
    C_n \xrightarrow{d_n} 
    C_{n-1} \xrightarrow{d_{n-1}} 
    \cdots \xrightarrow{d_1} 
    C_0 \to 
  0 
)$ 
be a
chain complex of finite dimensional vector spaces over a field
$\FF$. Choose a basis $\cbasis{(i)}$ of $C_i$ and a basis $\hbasis{i}$
of the $i$-th homology group $H_i(C_*)$. The torsion of $C_*$ with
respect to these choices of bases is defined as follows.

For each $i$, let $\bbasis{i}$ be a set of vectors in $C_{i}$ such that
$d_{i}(\bbasis{i})$ is a basis of 
$B_{i-1}= \im(d_{i} \colon C_{i} \to C_{i-1})$ and 
let $\hbasis[\tilde{h}]{i}$ denote a lift of
$\hbasis{i}$ in $Z_i = \ker(d_{i} \colon C_i \to C_{i-1})$. The set of
vectors $d_{i+1}(\bbasis{i+1})\hbasis[\tilde{h}]{i}\bbasis{i}$ is a
basis of $C_i$. Let
$[d_{i+1}(\bbasis{i+1})\hbasis[\tilde{h}]{i}\bbasis{i}/\cbasis{i}] \in \FF^*$ 
denote the determinant of the transition matrix between those
bases (the entries of this matrix are coordinates of vectors in
$d_{i+1}(\bbasis{i+1})\hbasis[\tilde{h}]{i}\bbasis{i}$ with respect to
$\cbasis{i}$). The \emph{sign-determined Reidemeister torsion} of
$C_*$ (with respect to the bases $\cbasis{*}$ and $\hbasis{*}$) is the
following alternating product (see~\cite[Definition~3.1]{Turaev:2000}):
\begin{equation}\label{def:RTorsion}
  \Tor{C_*}{\cbasis{*}}{\hbasis{*}} 
  = (-1)^{|C_*|} \cdot  
  \prod_{i=0}^n 
  [d_{i+1}(\bbasis{i+1})\hbasis[\tilde{h}]{i}\bbasis{i}/\cbasis{i}]^{(-1)^{i+1}} 
  \in \FF^*.
\end{equation}
Here $$|C_*| = \sum_{k\mygeq 0} \alpha_k(C_*) \beta_k(C_*),$$ where
$\alpha_i(C_*) = \sum_{k=0}^i \dim C_k$ and 
$\beta_i(C_*) = \sum_{k=0}^i \dim H_k(C_*)$.

The torsion $\Tor{C_*}{\cbasis{*}}{\hbasis{*}}$ does not depend on the
choices of $\bbasis{i}$ nor on the lifts $\hbasis[\tilde{h}]{i}$.  Note that if
$C_*$ is acyclic (\ie~if $H_i = 0$ for all $i$), then $|C_*| = 0$.

\subsubsection*{Torsion of a CW-complex}

Let $W$ be a finite CW-complex and $(V, \rho)$ be a pair of a vector
space with an inner product over $\FF$ and a homomorphism of $\pi_1(W)$
into $Aut(V)$.  The vector space $V$ turns into a right $\ZZ[\pi_1(W)]$-module denoted $V_{\rho}$ by using the right
action of $\pi_1(W)$ on $V$ given by $v \cdot \gamma = \rho(\gamma)^{-1}(v)$, for
$v \in V$ and $\gamma \in \pi_1(W)$.  The complex of the universal cover 
with integer coefficients $C_*(\ucover{W}; \ZZ)$ also inherits a left
$\ZZ[\pi_1(W)]$-module structure via the action of $\pi_1(W)$ on
$\ucover{W}$ as the covering group.
We define the $V_\rho$-twisted
chain complex of $W$ to be
\[
C_*(W; V_\rho) = V_{\rho} \otimes_{\ZZ[\pi_1(W)]} C_*(\ucover{W}; \ZZ).
\]
The complex $C_*(W; V_\rho)$ computes the {\it $V_\rho$-twisted homology} of $W$
which is denoted by $H_*(W;V_\rho)$.

Let $\left\{e^{i}_1, \ldots, e^{i}_{n_i}\right\}$ be the set of
$i$-dimensional cells of $W$. We lift them to the universal cover and
we choose an arbitrary order and an arbitrary orientation for the
cells $\left\{ {\tilde{e}^{i}_1, \ldots, \tilde{e}^{i}_{n_i}}
\right\}$. If we choose an orthonormal basis $\{\bm{v}_1, \ldots,
\bm{v}_m\}$ of $V$, then we consider the corresponding basis
$$
\cbasis{i} = 
  \left\{ 
    \bm{v}_1 \otimes \tilde{e}^{i}_{1} , 
    \ldots, 
    \bm{v}_m \otimes \tilde{e}^{i}_{1}, 
    \cdots, 
    \bm{v}_1 \otimes \tilde{e}^{i}_{n_i}, 
    \ldots, 
    \bm{v}_m \otimes \tilde{e}^{i}_{n_i}
  \right\}
$$ 
of 
$C_i(W; V_\rho) = V_\rho \otimes_{\ZZ[\pi_1(W)]} C_*(\ucover{W};\ZZ)$. 
We call the basis $\cbasis{*} = \oplus_{i} \cbasis{i}$ 
{\it a geometric basis} of $C_*(W;V_\rho)$.
Now choosing for each $i$ a basis $\hbasis{i}$ of the
$V_\rho$-twisted homology $H_i(W; V_\rho)$, we can compute the torsion
$$\Tor{C_*(W; V_\rho)}{\cbasis{*}}{\hbasis{*}} \in \FF^*.$$

We mainly consider the torsion of acyclic chain complexes $C_*(W;V_\rho)$, \ie,
the homology group $H_*(W; V_\rho) = \bm{0}$. For acyclic chain complex $C_*(W;V_\rho)$, 
this definition only depends on the combinatorial class of $W$, the
conjugacy class of $\rho$, the choices of $\cbasis{*}$.
The basis $\cbasis{*}$ for $C_*(W;V_\rho)$ depends on the following choices:
\begin{itemize}
\item[1.] an order of cells $\{e^i_j\}$ and an orientation of each $\{e^i_j\}$;
\item[2.] a lift $\tilde{e}^{i}_{j}$ of $e^i_j$ and;
\item[3.] an orthonormal basis of the vector space $V$.
\end{itemize}
We summarize the effect of changing these choices to $\Tor{C_*(W; V_\rho)}{\cbasis{*}}{\emptyset}$ in the following three remarks.

\begin{remark}
  We have the same $\Tor{C_*(W; V_\rho)}{\cbasis{*}}{\emptyset}$ for
  all orthonormal bases of $V$ since the effect of change of
  orthonormal bases in $V$ is given by multiplying the determinant of
  the bases change matrix with power of $\chi(W)$.  If the Euler characteristic
  $\chi(W)$ is zero, then we have the same torsion for any basis of $V$.
\end{remark}

\begin{remark}
  The torsion $\Tor{C_*(W; V_\rho)}{\cbasis{*}}{\emptyset}$ depends on the choice of the
  lifts $\tilde{e}^{i}_j$ under the action of $\pi_1(W)$ by $\rho$.
  The effect of different lift of a cell is expressed as the determinant
  of $\rho(\gamma)$ for some $\gamma$ in $\pi_1(W)$.  To avoid this
  problem, we often use representations into ${\mathrm{SL}(V)}$.
\end{remark}

\begin{remark}\label{rem:sign_refinement}
  To define the Reidemeister torsion, we order the cells $\{e^i_j\}$ and chose an orientation of each $e^i_j$, if we choose a different order and different orientations of cells, we could change the torsion sign.  To remove this sign ambiguity, that only occurs when $m$ is odd, we use the fact that the sign of the torsions $\Tor{C_*(W;
    \IR)}{\cbasis{*}_\IR}{\hbasis{*}_\IR}$ and $\Tor{C_*(W;
    \IR)}{\cbasis{*}}{\hbasis{*}}$ change in the same way.
\end{remark}

Therefore we usually consider the torsion $\Tor{C_*(W; V_\rho)}{\cbasis{*}}{\emptyset}$
up to the above indeterminacy, namely up to a factor 
$\pm \det \rho(\gamma)$ for some $\gamma$ in $\pi_1(W)$.

We can construct the additional sign term referred to in Remark~\ref{rem:sign_refinement} 
as follows.
The cells 
$\left\{ \left. 
  \tilde{e}^{i}_j \, 
         \right|\, 
  0 \myleq i \myleq \dim W, 
  1 \myleq j \myleq n_i
\right\}$ are in one--to--one correspondence with the cells of $W$, 
their order and orientation are induced an order and 
an orientation for the cells 
$\left\{ \left. 
  e^{i}_j \,
         \right|\, 
  0 \myleq i \myleq \dim W, 
  1 \myleq j \myleq n_i
\right\}$. 
Again, corresponding to these
choices, we get a basis $\cbasis{i}_\IR$ over $\IR$ of $C_i(W; \IR)$.

Choose a \emph{homology orientation} of $W$, which is an orientation
of the real vector space $H_*(W; \IR) = \bigoplus_{i\mygeq 0}
H_i(W; \IR)$. Let $\mf{o}$ denote this chosen
orientation. Provide each vector space $H_i(W; \IR)$ with a reference
basis $\hbasis{i}_\IR$ such that the basis 
$\left\{ {\hbasis{0}_\IR, \ldots, \hbasis{\dim W}_\IR} \right\}$ of 
$H_*(W; \IR)$ is {positively oriented} with respect to $\mf{o}$. 
Compute the sign--determined Reidemeister torsion 
$\Tor{C_*(W; \IR)}{\cbasis{*}_\IR}{\hbasis{*}_\IR} \in \IR^*$ of 
the resulting based and homology based chain complex and consider its sign 
$$\tau_0 =
  \mathrm{sgn}\left(
    \Tor{C_*(W; \IR)}{\cbasis{*}_\IR}{\hbasis{*}_\IR}
   \right)
\in \{\pm 1\}.$$

We define the \emph{sign--refined twisted Reidemeister torsion} of $W$
{(with respect to $\mf{o}$)} to be
\begin{equation}\label{eqn:TorsionRaff}
  \tau_0^m \cdot 
  \Tor{C_*(W; V_\rho)}{\cbasis{*}}{\emptyset} \in \FF^* 
\end{equation}
where $m = \dim_{\FF} V$. 
This sign refinement also works for the chain complex $C_*(W;V_\rho)$ with non--trivial homology group.
When the dimension of $V$ is even, we do not need the sign refinement, 
\ie,
the torsion $\Tor{C_*(W; V_\rho)}{\cbasis{*}}{\emptyset}$ is determined 
up to $\det \rho(\gamma)$ for some $\gamma$ in $\pi_1(W)$.

One can prove that the sign--refined Reidemeister torsion is invariant 
under cellular subdivision, 
homeomorphism and simple homotopy equivalences. In fact,
it is precisely the sign $(-1)^{|C_*|}$ in Equation~(\ref{def:RTorsion})
which ensures all these important invariance properties to hold (see~\cite{Turaev:2002}).

\section{Definition of the polynomial torsion}
\label{section:def_polynomial_torsion}

In this section we define the polynomial torsion. This gives a point
of view from the Reidemeister torsion to polynomial invariants of
topological space.

Hereafter $M$ denotes a compact and connected three--dimensional
manifold such that its boundary $\partial M$ is empty or a disjoint
union of $b$ two--dimensional tori: $$\partial M = T^2_1 \cup \ldots \cup T^2_b.$$

In the sequel, we denote by $V$ a vector space over $\CC$ and by
$\rho$ a representation of $\pi_1(M)$ into $Aut(V)$, and such that
$\det \rho(\gamma) = 1$ for all $\gamma \in \pi_1(M)$.

Next we introduce a twisted chain complex with some variables.
It will be done by using a $\ZZ[\pi_1(M)]$--module with variables 
to define a new twisted chain complex.
We regard $\ZZ^n$ as the multiplicative group generated by $n$ variables
$t_1, \ldots, t_n$, \ie, 
$$\ZZ^n = \big\langle t_1, \ldots, t_n \,|\, t_it_j = t_jt_i \, (\forall i, j)\big\rangle$$
and 
consider  a surjective homomorphism $\varphi\colon \pi_1(M)\to\ZZ^n$.
We often abbreviate the $n$ variables $(t_1, \ldots, t_n)$ to $\bm{t}$ and 
the rational functions $\CC(t_1, \ldots, t_n)$ to $\CC(\bm{t})$.

When we consider the right action of $\pi_1(M)$ on $V(\bm{t}) = \CC(\bm{t}) \otimes V$
by the tensor representation 
\[
\varphi \otimes \rho^{-1}\colon \pi_1(M) \to Aut(V(\bm{t})),
\quad \gamma \mapsto \varphi(\gamma) \otimes \rho^{-1}(\gamma), 
\]
we have the associated twisted chain $C_*(M; V_\rho(\bm{t}))$ given by
$$
C_*(M; V_\rho(\bm{t})) = V_\rho(\bm{t}) \otimes_{\ZZ[\pi_1(M)]} C_*(\ucover{M}; \ZZ)
$$
where $ f \otimes v \otimes \gamma \cdot \sigma$ is identified with 
$f \varphi(\gamma) \otimes {\rho(\gamma)^{-1}}(v) \otimes \sigma$ for
any 
$\gamma \in \pi_1(M)$, 
$\sigma \in C_*(\ucover{M};\ZZ)$, 
$v \in V$ and 
$f \in \CC(\bm{t})$.
We call this complex {\it the $V_\rho (\bm{t})$-twisted chain complex} of $M$.

\begin{definition}
Fix a homology orientation on $M$. 
  If $C_*(M;V_\rho(\bm{t}))$ is acyclic, then the sign--refined Reidemeister torsion
  of $C_*(M;V_\rho(\bm{t}))$
  $$
  \PolyTors{M}{\tensor{\varphi}{\rho}}(t_1, \ldots, t_n) = 
    \tau_0^m \cdot
    \Tor{C_*(M;V_\rho(\bm{t}))}{\cbasis{*}}{\emptyset} 
    \in \CC(t_1, \ldots, t_n) \setminus \{0\}
  $$
  is called the \emph{polynomial torsion} of $M$.
\end{definition}

Observe that the sign--refined Reidemeister torsion
$\PolyTors{M}{\tensor{\varphi}{\rho}}(t_1, \ldots, t_n)$ is determined up to a factor
$t_1^{m_1} \cdots t_n^{m_n}$ like the classical Alexander polynomial.

\begin{example}[J.~Milnor~\cite{Milnor:1968}, P.~Kirk \& C. Livingston~\cite{KL}]
  \label{example:torsion_one_dim}
  Suppose that $M$ is the knot exterior $E_K = S^3 \setminus N(K)$ of
  a knot $K$ in $S^3$ where $N(K)$ is an open tubular neighbourhood of $K$. 
  
  If the representation $\rho \in \mathrm{Hom}(\pi_1(E_K); \Q)$ is the
  trivial homomorphism and $\varphi$ is the abelianization of
  $\pi_1(E_K)$, \ie, $\varphi \colon \pi_1(E_K) \to H_1(E_K;\ZZ) \simeq
  \langle t\rangle$, then the twisted chain complex $C_*(E_K;
  \Q(t)_\rho)$ is acyclic and the Reidemeister torsion
  $\Delta_{E_K}^{\varphi \otimes \rho}(t)$ is expressed as a rational
  function which is the Alexander polynomial $\Delta_K(t)$ divided by
  $(t-1)$ (see also~\cite{Turaev:2000, Turaev:2002}).
\end{example}

\begin{example}
  \label{example:torsion_link_one_dim}
  Suppose now that $M$ is the link exterior $E_L = S^3 \setminus N(L)$
  of a link $L$ in $S^3$. We suppose that $L$ has $n$ components, where $n \geqslant 2$.
  We denote by $\mu_i$ the meridian of the $i$-th component.
  Consider the abelianization $\varphi \colon \pi_1(E_L)
  \to \ZZ^n$ defined by $\varphi(\mu_i) = t_i$. Let $\rho \colon
  \pi_1(E_L) \to \GL(1;\CC) = \CC \setminus \{0\}$ be the
  one--dimensional representation such that $\rho(\mu_i) = \xi_i$.
  Then the twisted chain complex
  $C_*(E_L; \CC(\bm{t})_\rho)$ is acyclic and the Reidemeister torsion
  $\Delta_{E_L}^{\varphi \otimes \rho}(t_1, \ldots, t_n)$ is given by
  (up to $\pm (\xi_1^{-1} t_1)^{k_1} \cdots (\xi_n^{-1} t_n)^{k_n}$, $k_i \in \ZZ$)
  \[
  \Delta_{E_L}^{\varphi \otimes \rho}(t_1, \ldots, t_n)
  = \Delta_L(\xi_1^{-1} t_1, \ldots, \xi_n^{-1} t_n)
  \] 
  where $\Delta_L(t_1, \ldots, t_n)$
  is the Alexander polynomial of $L$.
\end{example}

\section{Torsion for finite sheeted abelian coverings}
\label{section:Fox}

\subsection{Statement of the result}
Let $\abcover{M}$ be a finite sheeted abelian covering of $M$, where $M$ denotes a compact and connected three--dimensional
manifold such that its boundary $\partial M$ is empty or a disjoint
union of $b$ two--dimensional tori: $$\partial M = T^2_1 \cup \ldots \cup T^2_b.$$
We denote by $p$  the induced homomorphism from $\pi_1(\abcover{M})$ to $\pi_1(M)$ by 
the covering map $\abcover{M} \to M$.  
The associated deck transformation group is a finite abelian group $G$ of order $|G|$. 
We endow the manifolds $M$ and $\abcover{M}$ with some arbitrary homology orientations.

We have the following exact sequence:
\begin{equation}\label{seq:covering_induced_hom}
  1 \to
  \pi_1(\abcover{M}) \xrightarrow{p}
  \pi_1(M) \xrightarrow{\pi}  
  G  \to 1.
\end{equation}

When we consider the polynomial torsion for $\abcover{M}$, we use the
pull--back of homomorphisms of $\pi_1(M)$ as homomorphisms of
$\pi_1(\abcover{M})$.  We denote by $\varphi$ a surjective
homomorphism from $\pi_1(M)$ to $\ZZ^n$ and by $\hatvarphi$ the
pull-back by $p$.  We also suppose that $\pi$ factors through $\varphi$.
Our situation is summarized as follows:
$$
\xymatrix@R15pt{
  \pi_1(\abcover{M}) \ar[r]^{p_*} \ar[dr]_{\hatvarphi} & \pi_1(M) \ar@{->>}[d]^\varphi \ar[r]^{\pi}& G \\
                       &  \ZZ^n \ar@{-->}[ur].
}
$$

Similarly we use the symbol $\hatrho$ for the pull--back of $\rho \colon \pi_1(M) \to Aut(V)$
by $p$, where $V$ is a vector space.
For homomorphisms of the quotient group $G \simeq \pi_1(M) / \pi_1(\abcover{M})$, we use 
the Pontrjagin dual of $G$ which is the set of all representations
$\xi \colon G \to \CC^* = \CC \setminus \{0\}$ from $G$ to non--zero complex numbers. 
Let $\hat{G}$ denote this space.

We give the statement of
the polynomial torsion for abelian coverings via that of the based manifold.
\begin{theorem}\label{theorem:torsion_covering}
  With the above notation, we suppose that the twisted chain complex $C_*(M; V_\rho(\bm{t}))$ is acyclic.
  Then the twisted chain complex $C_*(\abcover{M}; V_{\hatrho}(\bm{t}))$ is also acyclic and the polynomial torsion
  is expressed as
  \begin{equation}\label{eqn:covering_formula}
    \PolyTors{\abcover{M}}{\tensor{\hatvarphi}{\hatrho}}(\bm{t})
    = \epsilon \cdot \prod_{\xi \in \hat{G}} \PolyTors{M}{(\tensor{\varphi}{\rho}) \otimes \xi}(\bm{t})
  \end{equation}
  where $\epsilon$ is a sign equal to 
  $\tau_0(\abcover{M})^m \cdot \tau_0(M)^{m|G|}$ and $m=\dim V$.
\end{theorem}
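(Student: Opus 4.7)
My plan is to reduce the torsion computation on $\abcover{M}$ to a product of torsions on $M$ in two steps: a Shapiro-type identification of the twisted chain complex of $\abcover{M}$ as an induced complex on $M$, followed by the decomposition of the regular representation of the finite abelian group $G$ into characters. Since $\abcover{M}$ and $M$ share the same universal cover $\ucover{M}$ and since $\hatrho = \rho\circ p$, $\hatvarphi = \varphi\circ p$, a choice of coset representatives $\{g_1,\ldots,g_{|G|}\}$ of $\pi_1(\abcover{M})$ in $\pi_1(M)$ together with a choice of lifts of the cells of $M$ to $\ucover{M}$ yields a natural isomorphism of based chain complexes
\[
C_*(\abcover{M};V_{\hatrho}(\bm{t})) \;\cong\; C_*\bigl(M;\,V_\rho(\bm{t}) \otimes_\CC \CC[G]\bigr),
\]
where $\CC[G]$ carries the regular representation of $G$, viewed as a $\pi_1(M)$-module via $\pi$. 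The geometric basis on the left corresponds to the natural tensor basis on the right.

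Next, since $G$ is finite abelian, Pontrjagin duality gives $\CC[G] \cong \bigoplus_{\xi\in\hat{G}} \CC_\xi$ as $G$-modules, and this identification lifts through $\pi$ to an isomorphism of $\pi_1(M)$-modules. Combining with the previous step produces
\[
C_*(\abcover{M};V_{\hatrho}(\bm{t})) \;\cong\; \bigoplus_{\xi\in\hat{G}} C_*\bigl(M;\,(V_\rho\otimes\CC_\xi)(\bm{t})\bigr).
\]
The hypothesis that $\pi$ factors through $\varphi$ guarantees that each summand on the right is exactly the chain complex whose torsion defines $\PolyTors{M}{(\tensor{\varphi}{\rho}) \otimes \xi}(\bm{t})$, since tensoring the coefficient module by $\xi$ amounts to rescaling each $t_i$ by the root of unity $\xi(t_i)$. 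This rescaling is an automorphism of $\CC(\bm{t})$ that preserves acyclicity, so the acyclicity of $C_*(M;V_\rho(\bm{t}))$ propagates to every summand and hence to $C_*(\abcover{M};V_{\hatrho}(\bm{t}))$.

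By multiplicativity of the (unsigned) Reidemeister torsion under direct sums of based acyclic complexes with compatible bases, this decomposition yields
\[
\Tor{C_*(\abcover{M};V_{\hatrho}(\bm{t}))}{\cbasis{*}}{\emptyset} \;=\; \prod_{\xi\in\hat{G}}\Tor{C_*(M;(V_\rho\otimes\CC_\xi)(\bm{t}))}{\cbasis{*}}{\emptyset}.
\]
Attaching the sign refinements from equation~(\ref{eqn:TorsionRaff}) inserts the factor $\tau_0(\abcover{M})^m$ on the left and $\prod_\xi\tau_0(M)^m = \tau_0(M)^{m|G|}$ on the right, since $\dim(V\otimes\CC_\xi) = m$ for every $\xi$. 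Using $\tau_0\in\{\pm 1\}$, these combine into the announced sign $\epsilon = \tau_0(\abcover{M})^m\cdot\tau_0(M)^{m|G|}$, giving~(\ref{eqn:covering_formula}).

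The main obstacle is the careful bookkeeping of the two changes of basis: the Shapiro identification via the coset representatives $\{g_i\}$ and the discrete Fourier transform decomposing $\CC[G]$ into character lines. Both transition matrices are block-scalar and independent of $\bm{t}$; following Porti's method~\cite{Porti:2004}, the key observation is that their determinants appear with the same exponent in every chain degree, so that their contributions telescope inside the alternating product~(\ref{def:RTorsion}). Any residual constant is either $1$ or is absorbed into the polynomial torsion's intrinsic indeterminacy $\pm t_1^{m_1}\cdots t_n^{m_n}$, so~(\ref{eqn:covering_formula}) holds as an equality of (classes of) rational functions with the stated sign.
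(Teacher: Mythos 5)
Your proposal is correct and follows essentially the same route as the paper: the Shapiro-type isomorphism $C_*(\abcover{M};V_{\hatrho}(\bm{t}))\cong (V_\rho(\bm{t})\otimes_\CC\CC[G])\otimes_{\ZZ[\pi_1(M)]}C_*(\ucover{M};\ZZ)$ is exactly Lemma~\ref{lemma:Fox}, the character decomposition of $\CC[G]$ matches the paper's use of the orthogonal idempotents $f_\xi$, and the Euler-characteristic-zero argument for the basis changes is the same one the paper borrows from Porti. The only cosmetic discrepancy is that you write $\xi(t_i)$ where the paper, more carefully, uses the lift $\bar\xi$ through $\varphi$ (Remark~\ref{remark:variables_in_formula}); otherwise the two arguments coincide step for step, including the handling of the sign $\epsilon$.
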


\begin{remark}
As we already observed, the sign term in Equation~(\ref{eqn:covering_formula}) is not relevant when $m$ is even.
\end{remark}

\begin{remark}[Explanation of Formula~(\ref{eqn:covering_formula}) with variables]
\label{remark:variables_in_formula}
If we denote by $\bar{\xi}$ the composition $\xi \circ \bar{\pi}$ as in the following commutative diagram 
$$ 
\xymatrix@R15pt{
  \pi_1(M) \ar[r]^{\pi} \ar@{->>}[d]_{\varphi} & G \ar[r]^{\xi} & \CC \\
  \ZZ^n \ar[ur]_{\bar{\pi}}
}
$$
then Formula~(\ref{eqn:covering_formula}) can be written concretely as follows:
$$
\PolyTors{\abcover{M}}{\tensor{\hatvarphi}{\hatrho}} 
(t_1, \ldots, t_n) =
  \epsilon \cdot
  \prod_{\xi \in \hat{G}}
    \PolyTors{M}{\tensor{\varphi}{\rho}} (t_1 \bar{\xi}(t_1), \ldots, t_n\bar{\xi}(t_n))
$$
\end{remark}

In the special case where $n=1$, $G = \ZZ/q\ZZ$ and $\abcover{M}$ is
the $q$--fold cyclic covering $M_q$ of $M$, then we have that $\bar{\xi}(t)
= e^{2\pi k \sqrt{-1} /q}$, for $k = 0, \ldots, q-1$.  Hence we have the following covering formula for the polynomial torsion.
\begin{corollary}\label{cor:cyclic_twistedAlexnader}
  Suppose that $\varphi(\pi_1(M)) = \langle t \rangle$ and
  $\hatvarphi(\pi_1(M_q)) = \langle s \rangle \subset \langle t \rangle$,
  where we suppose that $s=t^q$. We have
  \[
    \PolyTors{M_q}{\tensor{\hatvarphi}{\hatrho}}(s) =
    \PolyTors{M_q}{\tensor{\hatvarphi}{\rho}}(t^q) = \epsilon \cdot 
    \prod_{k=0}^{q-1} \PolyTors{M}{\tensor{\varphi}{\rho}}(e^{2\pi k \sqrt{-1} /q}t).
  \]
\end{corollary}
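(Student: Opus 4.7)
The plan is to specialize Theorem~\ref{theorem:torsion_covering}, via its explicit form in Remark~\ref{remark:variables_in_formula}, to the case $n=1$, $G = \ZZ/q\ZZ$ and $\abcover{M} = M_q$. First I would check that the hypothesis of the theorem (namely that $\pi$ factors through $\varphi$) is indeed satisfied: since $\hatvarphi(\pi_1(M_q)) = \langle t^q \rangle$, the subgroup $p(\pi_1(M_q))$ is $\varphi^{-1}(\langle t^q \rangle)$, so $M_q$ is precisely the $q$--fold cyclic cover associated with the composition $\pi_1(M) \xrightarrow{\varphi} \langle t\rangle \to \langle t \rangle/\langle t^q\rangle \simeq \ZZ/q\ZZ$. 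In the notation of Remark~\ref{remark:variables_in_formula}, the factor $\bar{\pi} \colon \langle t \rangle \to \ZZ/q\ZZ$ is exactly reduction modulo $q$, and $\bar{\pi}(t)$ is a generator of $G$.

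Next I would identify the Pontryagin dual: $\hat{G}$ consists of the $q$ characters $\xi_k$, $k=0,\ldots,q-1$, defined by $\xi_k(\overline{j}) = e^{2\pi k j\sqrt{-1}/q}$. Composing with $\bar{\pi}$ gives
\[
\bar{\xi}_k(t) = \xi_k(\bar{\pi}(t)) = e^{2\pi k\sqrt{-1}/q}, \qquad k=0,\ldots, q-1.
\]
Substituting these values into the formula of Remark~\ref{remark:variables_in_formula} yields
\[
\PolyTors{M_q}{\tensor{\hatvarphi}{\hatrho}}(t)
= \epsilon \cdot \prod_{k=0}^{q-1} \PolyTors{M}{\tensor{\varphi}{\rho}}\bigl(e^{2\pi k\sqrt{-1}/q}\, t\bigr),
\]
with $\epsilon = \tau_0(M_q)^m \cdot \tau_0(M)^{mq}$, which is exactly the second equality of the statement.

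Finally, the first equality $\PolyTors{M_q}{\tensor{\hatvarphi}{\hatrho}}(s) = \PolyTors{M_q}{\tensor{\hatvarphi}{\rho}}(t^q)$ is simply a change of variable: since $\hatvarphi(\pi_1(M_q)) = \langle s \rangle$ with $s = t^q$, the polynomial torsion of $M_q$ is intrinsically a rational function in the variable $s$, which one is free to rewrite as a rational function in $t$ via the substitution $s = t^q$; the symbol $\rho$ on the right is just shorthand for its pull-back $\hatrho$ under $p$. There is no genuine obstacle in this deduction beyond carefully matching the variable conventions; the only point requiring a small verification is the compatibility of $\bar{\pi}$ with the covering $M_q \to M$, which is settled by the hypothesis $s = t^q$.
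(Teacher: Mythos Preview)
Your proposal is correct and follows exactly the paper's approach: the paper derives the corollary in a single sentence preceding its statement, noting that in the special case $n=1$, $G=\ZZ/q\ZZ$, $\abcover{M}=M_q$ one has $\bar{\xi}(t)=e^{2\pi k\sqrt{-1}/q}$ for $k=0,\ldots,q-1$, whence the formula of Remark~\ref{remark:variables_in_formula} gives the result. Your write-up simply spells out the verification of the factoring hypothesis, the identification of $\hat{G}$, and the change of variable $s=t^q$ in more detail than the paper does.
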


The torsion $\PolyTors{M_q}{\tensor{\hatvarphi}{\rho}}(t^q)$ in
Corollary~\ref{cor:cyclic_twistedAlexnader} can be regarded as a kind
of the total twisted Alexander polynomial introduced
in~\cite{SilverWilliams:DynamicsTwistedAlexander}.  Hirasawa and
Murasugi~\cite{HirasawaMurasugi:hakone2007} worked on the total
twisted Alexander polynomial for abelian representations as in
Example~\ref{example:torsion_one_dim} and
they observed the similar formula as in 
Corollary~\ref{cor:cyclic_twistedAlexnader} in terms of the total
Alexander polynomial and the Alexander polynomial of a knot in the
cyclic branched coverings over $S^3$.

\subsection{Proof of Theorem~\ref{theorem:torsion_covering}}

We use the same notation as in Remark~\ref{remark:variables_in_formula}.

First observe the following key facts:
\begin{itemize}
\item the universal cover $\ucover{M}$ of $M$ is also the one of
  $\abcover{M}$,
\item the torsion $\PolyTors{\abcover{M}}{\tensor{\hatvarphi}{\rho}}$ is
  computed using the twisted complex $$V_\rho({t_1, \ldots, t_n})
  \otimes_{\ZZ[\pi_1(\abcover{M})]} C_*(\ucover{M}; \ZZ),$$
\item whereas the torsion $\PolyTors{M}{\tensor{\varphi}{\rho}}$ is
  computed using $$V_\rho({t_1, \ldots, t_n}) \otimes_{\ZZ[\pi_1({M})]}
  C_*(\ucover{M}; \ZZ).$$
\end{itemize}
 
   \begin{lemma}
 Let $x \in V_\rho(t_1, \ldots, t_n)$, $c \in C_*(\ucover{M}; \ZZ)$. For $\gamma \in \pi_1(M)$, $x\gamma^{-1} \otimes_{\pi_1(\abcover{M})} \gamma c$ only depends on $\pi(\gamma) \in G$. For $g \in G$, choose $\gamma \in \pi_1(M)$ such that $\pi(\gamma) = g$ and set 
    \begin{equation}\label{eqn:G-action_bar_M}
      g \star (x \otimes_{\pi_1(\abcover{M})} c)
      = x\gamma^{-1} \otimes_{\pi_1(\abcover{M})} \gamma c.
    \end{equation}
   This defines a natural action of $G$ on $V_\rho(t_1, \ldots, t_n) \otimes_{\pi_1(\abcover{M})} C_*(\ucover{M}; \ZZ)$.
   \end{lemma}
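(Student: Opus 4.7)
The plan is to prove three things in order: (i) for any fixed lift $\gamma \in \pi^{-1}(g)$ the formula descends to a well-defined map on the tensor product over $\ZZ[\pi_1(\abcover{M})]$; (ii) the result is independent of the choice of lift $\gamma$; and (iii) the collection of these maps, one for each $g \in G$, defines a left action. The crucial input throughout is the normality of $p(\pi_1(\abcover{M})) = \ker\pi$ in $\pi_1(M)$, which is immediate from the exact sequence~(\ref{seq:covering_induced_hom}); explicitly, for every $\alpha \in \pi_1(\abcover{M})$ and every $\gamma \in \pi_1(M)$, both conjugates $\gamma\alpha\gamma^{-1}$ and $\gamma^{-1}\alpha\gamma$ lie in $\pi_1(\abcover{M})$, so they may be shuffled across the tensor symbol.

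For (i), I would compare $(x\alpha)\gamma^{-1} \otimes \gamma c$ with $x\gamma^{-1} \otimes \gamma\alpha c$ for arbitrary $\alpha \in \pi_1(\abcover{M})$, rewriting $\alpha\gamma^{-1} = \gamma^{-1}(\gamma\alpha\gamma^{-1})$ and sliding the factor $\gamma\alpha\gamma^{-1} \in \pi_1(\abcover{M})$ from left to right across the tensor. This verifies compatibility with the defining relation of $V_\rho(\bm{t}) \otimes_{\ZZ[\pi_1(\abcover{M})]} C_*(\ucover{M};\ZZ)$. For (ii), given a second lift $\gamma' = \gamma\alpha$ with $\alpha \in \pi_1(\abcover{M})$, the same manipulation, this time introducing $\beta = \gamma\alpha\gamma^{-1} \in \pi_1(\abcover{M})$, reduces $x(\gamma')^{-1} \otimes \gamma' c$ to $x\gamma^{-1}\beta^{-1} \otimes \beta \gamma c = x\gamma^{-1} \otimes \gamma c$.

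For (iii), choosing representatives $\gamma, \delta \in \pi_1(M)$ with $\pi(\gamma) = g$ and $\pi(\delta) = h$, the product $\gamma\delta$ is a lift of $gh$, and a direct computation gives
\[
g \star \bigl( h \star (x \otimes c) \bigr) = x\delta^{-1}\gamma^{-1} \otimes \gamma\delta c = x(\gamma\delta)^{-1} \otimes (\gamma\delta) c = (gh) \star (x \otimes c),
\]
while the identity of $G$ acts trivially via the lift $\gamma = 1$. The main (rather mild) obstacle is purely bookkeeping: one must be careful to keep track of whether a group element is acting from the left on $C_*(\ucover{M};\ZZ)$ or from the right on $V_\rho(\bm{t})$, and to use normality each time an element needs to cross the tensor symbol. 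No further ingredient beyond normality and the bilinearity of the tensor product is required.
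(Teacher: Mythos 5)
Your proof is correct and uses the same essential mechanism as the paper — the normality of $p(\pi_1(\abcover{M}))=\ker\pi$ in $\pi_1(M)$, allowing conjugates of $\pi_1(\abcover{M})$-elements to slide across the tensor symbol. You are somewhat more thorough than the paper, which only explicitly checks independence of the chosen lift $\gamma$ (using the factorization $\gamma'=\hat\gamma\gamma$, $\hat\gamma\in p(\pi_1(\abcover{M}))$, rather than your $\gamma'=\gamma\alpha$) and leaves the balancedness over $\ZZ[\pi_1(\abcover{M})]$ and the action axioms implicit, but these are the routine verifications you carry out in steps (i) and (iii).
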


  Further observe that, since for any lift $\gamma$ of $g$, $\gamma$ is not contained in
  $p(\pi_1(\abcover{M}))$, we can not reduce the right hand side in
  Equation~$(\ref{eqn:G-action_bar_M})$.

\begin{proof}
  Take another lift $\gamma'$ in $\pi_1(M)$ of $g \in G$. Since $\gamma' = \hat\gamma \gamma $
  for some $\hat\gamma \in p(\pi_1(\abcover{M}))$, we can see that 
  $x\gamma'^{-1} \otimes_{\pi_1(\abcover{M})} \gamma' c
    = x\gamma^{-1} \otimes_{\pi_1(\abcover{M})} \gamma c.$
\end{proof}

The proof of Theorem~\ref{theorem:torsion_covering} is based on the following
technical lemma.
\begin{lemma}\label{lemma:Fox}
  The map
  \begin{equation}
    \Phi \colon 
      V_\rho({\bm{t}}) \otimes_{\ZZ[\pi_1(\abcover{M})]} C_*(\ucover{M}; \ZZ) \to 
      (V_\rho({\bm{t}}) \otimes_\CC \CC[G]) \otimes_{\ZZ[\pi_1({M})]} C_*(\ucover{M}; \ZZ)
  \end{equation}
  given by
   $$\Phi(x \otimes_{\pi_1(\abcover{M})} c) = (x \otimes 1) \otimes c$$
    is an isomorphism of complexes of $\CC[G]$-modules
  where the action of $G$ on the twisted complex
  $(V_\rho({\bm{t}}) \otimes_\CC \CC[G]) \otimes_{\ZZ[\pi_1({M})]}
  C_*(\ucover{M}; \ZZ)$ is given by 
  $$g \cdot ( x \otimes g' \otimes c) = x
  \otimes gg' \otimes c.$$
  and the right action of $\gamma \in \pi_1(M)$ on $V_\rho({\bm{t}}) \otimes_\CC \CC[G]$ is
  defined by 
  \[
  ((f \otimes v) \otimes g) \cdot \gamma = f \varphi(\gamma) \otimes \rho^{-1}(\gamma)(v) \otimes \pi(\gamma) g,
  \]
  where $f \in \CC(t)$, $v \in V$ and $g \in G$.
\end{lemma}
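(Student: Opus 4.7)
The plan is to check the four standard things for a map of the form $\Phi$: well-definedness, commutation with the boundary, $\CC[G]$-equivariance, and bijectivity. The arguments are all routine manipulations of tensor product relations, but one has to be careful with the left/right module structures.

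First I would verify that $\Phi$ is well-defined as a map out of the tensor product over $\ZZ[\pi_1(\abcover{M})]$. Fix $\gamma \in p(\pi_1(\abcover{M})) \subseteq \pi_1(M)$; then $\pi(\gamma) = 1$ in $G$, so using the stated right action
\[
(x\gamma^{-1}\otimes 1)\cdot\gamma = x\gamma^{-1}\varphi(\gamma)\varphi(\gamma^{-1})\otimes\pi(\gamma)\cdot 1 = x\otimes 1
\]
(after regrouping), and therefore in the target complex
\[
(x\gamma^{-1}\otimes 1)\otimes_{\pi_1(M)}\gamma c = ((x\gamma^{-1}\otimes 1)\cdot\gamma)\otimes_{\pi_1(M)} c = (x\otimes 1)\otimes_{\pi_1(M)} c,
\]
which is exactly the relation needed so that $\Phi(x\gamma^{-1}\otimes\gamma c) = \Phi(x\otimes c)$. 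Linearity in the other inputs is immediate.

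Next, $\Phi$ is visibly a chain map because the boundary operator acts only through $C_*(\ucover{M};\ZZ)$ and $\Phi$ is the identity on that factor; and $\Phi$ is $\CC[G]$-equivariant by the direct computation, for any lift $\gamma \in \pi_1(M)$ of $g \in G$,
\[
\Phi\bigl(g\star(x\otimes c)\bigr) = (x\gamma^{-1}\otimes 1)\otimes\gamma c = \bigl((x\gamma^{-1}\otimes 1)\cdot\gamma\bigr)\otimes c = (x\otimes g)\otimes c = g\cdot\Phi(x\otimes c),
\]
using the right action definition to pick up the factor $\pi(\gamma) = g$ on the $\CC[G]$-slot.

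The last point, bijectivity, is the one that really uses the covering structure, and it is the step I would single out as the main thing to get right. I would fix a family $\{\tilde e^i_j\}$ of lifts to $\ucover{M}$ of the cells of $M$, so that $C_*(\ucover{M};\ZZ)$ is free as a left $\ZZ[\pi_1(M)]$-module on these lifts, and choose coset representatives $\{\gamma_g\}_{g\in G}$ for $p(\pi_1(\abcover{M}))\backslash\pi_1(M)$. Then $C_*(\ucover{M};\ZZ)$ is free as a left $\ZZ[\pi_1(\abcover{M})]$-module on the enlarged family $\{\gamma_g \tilde e^i_j\}_{g,i,j}$, so that the domain of $\Phi$ has $\CC(\bm{t})$-basis $\{x \otimes \gamma_g \tilde e^i_j\}$ (with $x$ running over a fixed basis of $V_\rho(\bm{t})$), while the codomain has $\CC(\bm{t})$-basis $\{(x\otimes h)\otimes \tilde e^i_j\}_{h,i,j}$. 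Computing
\[
\Phi(x\otimes\gamma_g\tilde e^i_j) = (x\otimes 1)\otimes\gamma_g\tilde e^i_j = ((x\otimes 1)\cdot\gamma_g)\otimes\tilde e^i_j = (x\gamma_g\otimes g)\otimes\tilde e^i_j,
\]
and observing that $x\mapsto x\gamma_g$ is a $\CC(\bm{t})$-linear automorphism of $V_\rho(\bm{t})$ for each fixed $g$, shows that $\Phi$ maps a basis to a basis and is therefore an isomorphism. The only delicate point is bookkeeping: in the target, one cannot slide $\gamma_g^{-1}$ across the tensor without producing an honest element of $G$, and it is precisely this accounting that makes $\CC[G]$ appear on the right-hand side.
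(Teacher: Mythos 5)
Your proof is correct, and for well-definedness, the chain map property, and $\CC[G]$-equivariance it follows essentially the same computations as the paper's proof. The one place you diverge is the bijectivity step: the paper simply exhibits an explicit inverse $\Psi((x\otimes g)\otimes c) = g\star(x\otimes c)$, whereas you argue by choosing lifts of cells and coset representatives $\{\gamma_g\}$ so that both sides become free $\CC(\bm{t})$-modules of the same rank, and then check that $\Phi$ sends a basis to a basis. Both are valid; the inverse-map route is shorter for the lemma itself, while your basis-tracking argument has the side benefit of anticipating exactly the computation the paper performs immediately after the lemma, where it compares the geometric basis of $V_\rho(\bm{t})\otimes_{\ZZ[\pi_1(\abcover{M})]}C_*(\ucover{M};\ZZ)$ with the geometric basis of $(V_\rho(\bm{t})\otimes\CC[G])\otimes_{\ZZ[\pi_1(M)]}C_*(\ucover{M};\ZZ)$ and observes that the discrepancy coming from $x\mapsto x\gamma_g$ washes out in the torsion because $\chi(M)=0$. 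One small notational caveat: in your well-definedness display the intermediate expression $x\gamma^{-1}\varphi(\gamma)\varphi(\gamma^{-1})\otimes\pi(\gamma)\cdot 1$ conflates the $\CC(\bm{t})$-action of $\varphi$ with the combined right action on $V_\rho(\bm{t})$; the computation is right, but it would be cleaner to write $(x\gamma^{-1}\otimes 1)\cdot\gamma = x\gamma^{-1}\gamma\otimes\pi(\gamma) = x\otimes 1$ using the stated right action directly.
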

\begin{proof}[Proof of Lemma~\ref{lemma:Fox}]
  We first observe that $\Phi$ is a well--defined chain map of
  $\CC$--vector spaces since $\pi_1(\abcover{M})$ is a normal subgroup of
  $\pi_1(M)$. By the definition, we can see that
  $\Phi(x \otimes_{\pi_1(\abcover{M})} \gamma c) = \Phi(x \gamma \otimes_{\pi_1(\abcover{M})} c)$
  for any $\gamma$ in $\pi_1(\abcover{M})$.
  Hence $\Phi$ is well--defined.
  From $\Phi(\bord (x \otimes_{\pi_1(\abcover{M})} c))
  = \Phi(x \otimes_{\pi_1(\abcover{M})} \bord c) = (x \otimes 1) \otimes \bord c
  = \bord( (x \otimes 1) \otimes c) = \bord( \Phi(x \otimes c))$
  it follows that $\Phi \circ \bord = \bord \circ \Phi$.
  The $G$-equivariance of $\Phi$ follows from 
  \begin{align*}
    \Phi(g \star (x \otimes_{\pi_1(\abcover{M})} c))
    &= \Phi(x \gamma^{-1}\otimes_{\pi_1(\abcover{M})} \gamma c)\\
    &= (x \gamma^{-1} \otimes 1) \otimes \gamma c\\
    &= (x \otimes g) \otimes c \\
    &= g \cdot \Phi(x \otimes_{\pi_1(\abcover{M})} c).
  \end{align*}
  We can prove that $\Phi$ is an isomorphism by taking its inverse $\Psi$
  as $\Psi((x \otimes g) \otimes c) = g \star (x \otimes c).$
\end{proof}

We mention bases of the chain complex $V_\rho({\bm{t}}) \otimes_{\ZZ[\pi_1(\abcover{M})]} C_*(\ucover{M}; \ZZ)$
before the next step.
The following basis
\begin{equation}\label{eqn:geombasis1} 
  \cbasis[\hat{c}]{*} = 
  \bigcup_{i \mygeq 0} \{x_k \otimes_{\pi_1(\abcover{M})} \gamma_g \widetilde{e}^i_{j} \,|\, 
  1 \myleq j \myleq n_i, g \in G, 1 \myleq k \myleq m\}
\end{equation}
is the geometric basis used to compute the
polynomial torsion
$\PolyTors{\abcover{M}}{\tensor{\hatvarphi}{\hatrho}}$. When we consider the bases change 
from the basis in Equation~(\ref{eqn:geombasis1}) to the 
basis in the next equation
\begin{equation}\label{eqn:basis2}
  \{g \star(x_k \otimes_{\pi_1(\abcover{M})} \widetilde{e}^i_{j}) 
  = x_k\gamma_g^{-1} \otimes_{\pi_1(\abcover{M})} \gamma_g\widetilde{e}^i_{j}
  \,|\, 1 \myleq j \myleq n_i, g \in G, 1 \myleq k \myleq m\},
\end{equation}
we can see that the action of $\gamma_g^{-1}$ arises the change in
$\PolyTors{\abcover{M}}{\tensor{\hatvarphi}{\hatrho}}$ by
multiplying its determinant powered the Euler characteristic of
$M$. Since the Euler characteristic of $M$ is zero,
the polynomial torsion $\PolyTors{\abcover{M}}{\tensor{\hatvarphi}{\hatrho}}$ can also be
computed using the basis in Equation~(\ref{eqn:basis2}). Finally
observe that $\Phi$ maps the basis in Equation~(\ref{eqn:basis2})
to the geometric basis
\begin{equation}\label{eqn:geombasis3} 
  \cbasis{*}_G =
  \bigcup_{i \mygeq 0}
  \{(x_k \otimes g) \otimes \widetilde{e}^i_{j} \,|\,  1 \myleq j \myleq n_i, g \in G, 1 \myleq k \myleq m\},
\end{equation}
thus
\[
\PolyTors{\abcover{M}}{\tensor{\hatvarphi}{\hatrho}} = 
\tau_0(\abcover{M})^m \cdot
\Tor{
  (V_\rho(\bm{t}) \otimes_\CC \CC[G]) \otimes_{\ZZ[\pi_1(M)]} C_*(\ucover{M}; \ZZ)
}{\cbasis[\hat{c}]{*}}{\emptyset}.
\]

Now, we want to compute the torsion of 
$(V_\rho(\bm{t}) \otimes_\CC \CC[G])\otimes_{\ZZ[\pi_1({M})]} C_*(\ucover{M}; \ZZ)$ 
in terms of polynomial torsions of $M$. 
To this end we use the decomposition along 
orthogonal idempotents of the group ring $\CC[G]$, 
see~\cite{serre} for details. 
Associated to $\xi \in \hat{G}$, we define:
\[
f_\xi = \frac{1}{|G|} \sum_{g \in G} \xi(g^{-1}) g \in \CC[G].
\]
The properties of $f_\xi$ are the following
\[
f_\xi^2 = 
f_\xi, \quad f_\xi f_{\xi'} = 0 \, (\hbox{ if }\,  \xi \ne \xi'),
\quad \sum_{\xi \in \hat{G}} f_\xi = 1
\] 
and
\begin{equation*}\label{eqn:action_xi}
  g \cdot f_\xi = \xi(g) f_\xi, \text{ for all } g \in G.
\end{equation*}
We have the following $\CC[G]$-modules decomposition of the group ring
as a direct sum according to its representations:
\begin{equation}\label{eqn:decomposition}
  \CC[G] = \bigoplus_{\xi \in \hat{G}} \CC[f_\xi].
\end{equation}
Here each factor is the 1-dimensional $\CC$-vector space which is
isomorphic to the $\CC[G]$-module associated to $\xi\colon G \to \CC^*$.

Following~\cite[Section~3]{Porti:2004}, corresponding to the
decomposition in Equation~(\ref{eqn:decomposition}) we have a decomposition of
complexes of $\CC[G]$-modules:
\[
(V_\rho(\bm{t}) \otimes_\CC \CC[G])\otimes_{\ZZ[\pi_1(M)]} C_*(\ucover{M}; \ZZ) = 
\bigoplus_{\xi \in \hat{G}} (V_\rho(\bm{t}) \otimes_\CC \CC[f_\xi]) \otimes_{\ZZ[\pi_1(M)]}
C_*(\ucover{M}; \ZZ).
\]

\begin{remark}
  This decomposition implies that
  $(V_\rho({\bm{t}}) \otimes_\CC \CC[G]) \otimes_{\ZZ[\pi_1({M})]} C_*(\ucover{M}; \ZZ)$ is acyclic,
  since one can see that each chain complex
  $(V_\rho(\bm{t}) \otimes_\CC \CC[f_\xi]) \otimes_{\ZZ[\pi_1(M)]} C_*(\ucover{M}; \ZZ)$
  is acyclic from our assumptions and a change of variables.
\end{remark}

The geometric basis in Equation~(\ref{eqn:geombasis3}) induces a basis
compatible with the decomposition in
Equation~(\ref{eqn:decomposition}) by replacing $\{g \,|\, g \in G\}$
by $\{f_\xi\,|\, \xi \in \hat{G}\}$. The change of bases cancels when
we compute the torsion because Euler characteristic is zero,
see~\cite[Lemma~5.2]{Porti:2004}. And thus decomposition in
Equation~(\ref{eqn:decomposition}) implies that (in the natural
geometric bases):
\begin{align}
  \PolyTors{\abcover{M}}{\tensor{\hatvarphi}{\hatrho}}
  &= \tau_0(\abcover{M})^m \cdot 
     \Tor{(V_\rho(\bm{t}) \otimes_\CC \CC[G])\otimes_{\ZZ[\pi_1(M)]} C_*(\ucover{M}; \ZZ)}{\cbasis{*}_G}{\emptyset}
     \nonumber \\
  &= \tau_0(\abcover{M})^m \cdot 
     \prod_{\xi \in \hat{G}}
     \Tor{(V_\rho(\bm{t}) \otimes_\CC \CC[f_\xi])\otimes_{\ZZ[\pi_1(M)]} C_*(\ucover{M};\ZZ)}{\cbasis{*}}{\emptyset}. \label{eqn:decomp_tor_poly}
\end{align}

Each factor in the right hand side is related to
the polynomial torsion of $M$ and its relation is given by the following claim.

\begin{lemma}\label{lemma:poly_tors}
We have:
\[
\PolyTors{M}{(\tensor{\varphi}{\rho}) \otimes \xi} = 
  \tau_0(M)^m \cdot
  \Tor{(V_\rho(\bm{t}) \otimes_\CC \CC[f_\xi]) \otimes_{\ZZ[\pi_1(M)]} C_*(\ucover{M};\ZZ)}{\cbasis{*}}{\emptyset}.
\]
\end{lemma}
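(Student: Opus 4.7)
The strategy is to identify the chain complex appearing on the right-hand side of the claimed formula with the chain complex $C_*\bigl(M; V_\rho(\bm{t})_{(\tensor{\varphi}{\rho}) \otimes \xi}\bigr)$ that computes $\PolyTors{M}{(\tensor{\varphi}{\rho}) \otimes \xi}$. The key point is that $\CC[f_\xi] \subset \CC[G]$ is a one-dimensional $\CC$-vector space on which $G$ acts by the character $\xi$, since $g \cdot f_\xi = \xi(g) f_\xi$. Lifting along $\pi$ and inserting this into the action displayed in Lemma~\ref{lemma:Fox} gives
\[
((f \otimes v) \otimes f_\xi) \cdot \gamma \;=\; \xi(\pi(\gamma))\, f\varphi(\gamma) \otimes \rho^{-1}(\gamma)(v) \otimes f_\xi,
\]
which is exactly the action of the twisted tensor representation $(\tensor{\varphi}{\rho}) \otimes \xi$ on $V_\rho(\bm t)$, decorated with the redundant one-dimensional factor $\CC[f_\xi]$.

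Plan. First I would introduce the $\CC$-linear map
\[
\Theta \colon V_\rho(\bm t) \otimes_\CC \CC[f_\xi] \longrightarrow V_\rho(\bm t)_{(\tensor{\varphi}{\rho}) \otimes \xi}, \qquad (f \otimes v) \otimes f_\xi \longmapsto f \otimes v,
\]
and check by the computation above that it is an isomorphism of right $\ZZ[\pi_1(M)]$-modules. Tensoring over $\ZZ[\pi_1(M)]$ with $C_*(\ucover{M};\ZZ)$ then yields an isomorphism of chain complexes
\[
\Theta_* \colon (V_\rho(\bm t) \otimes_\CC \CC[f_\xi]) \otimes_{\ZZ[\pi_1(M)]} C_*(\ucover{M}; \ZZ) \xrightarrow{\;\cong\;} C_*\bigl(M;\, V_\rho(\bm t)_{(\tensor{\varphi}{\rho}) \otimes \xi}\bigr).
\]
Acyclicity of the target has already been noted (via the variable change $t_i \mapsto t_i \bar\xi(t_i)$), so the Reidemeister torsions of both sides are defined.

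Next I would match the geometric bases. The basis $\cbasis{*}$ used on the left in the statement of the lemma is $\{(x_k \otimes f_\xi) \otimes \tilde{e}^i_j\}$, and $\Theta_*$ carries it to $\{x_k \otimes \tilde{e}^i_j\}$, which is precisely the geometric basis used in the definition of $\PolyTors{M}{(\tensor{\varphi}{\rho}) \otimes \xi}$. Since Reidemeister torsion is invariant under basis-preserving isomorphisms of based chain complexes, the two unsigned torsions are equal, and multiplication by the common sign $\tau_0(M)^m$ gives the claimed equality. The only delicate point is the $\pi_1(M)$-equivariance of $\Theta$, which I already reduced above to the single identity $\pi(\gamma) \cdot f_\xi = \xi(\pi(\gamma)) f_\xi$; any scalar ambiguity in the choice of $f_\xi$ as a basis of the one-dimensional space $\CC[f_\xi]$ is harmless because it would enter the torsion only with exponent $\chi(M) = 0$, exactly the trick invoked in Equation~(\ref{eqn:decomp_tor_poly}) just before the statement.
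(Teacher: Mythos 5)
Your proposal is correct and takes essentially the same approach as the paper: the paper's proof is the single observation that $V_\rho(\bm{t}) \otimes_\CC \CC[f_\xi]$ is isomorphic, as a right $\ZZ[\pi_1(M)]$-module, to $V_\rho(\bm{t})$ with the action twisted by $\xi \circ \pi$, and your $\Theta$ together with the computation $\pi(\gamma) \cdot f_\xi = \xi(\pi(\gamma)) f_\xi$ is exactly that observation spelled out in detail. The added care about geometric bases and about the harmlessness (via $\chi(M)=0$) of rescaling $f_\xi$ is consistent with the paper's surrounding discussion and correctly fills in what the paper leaves implicit.
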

\begin{proof}[Proof of Lemma~\ref{lemma:poly_tors}]
  One can observe that, as a $\ZZ[\pi_1(M)]$-module,
  $V_\rho(\bm{t}) \otimes_\CC \CC[f_\xi]$ is isomorphic to
  $V_\rho(\bm{t})$ simply by replacing the action
  $\tensor{\varphi}{\rho}$ by $(\tensor{\varphi}{\rho})\otimes \xi$. This
  proves the equality of torsions.
\end{proof}

\begin{proof}[Proof of Theorem~\ref{theorem:torsion_covering}]
  Combining Equation~$(\ref{eqn:decomp_tor_poly})$ and
  Lemma~\ref{lemma:poly_tors}, we obtain
  $$
  \PolyTors{\abcover{M}}{\tensor{\hatvarphi}{\hatrho}} =
  \tau_0(\abcover{M})^m \cdot \tau_0(M)^{m|G|}\cdot \prod_{\xi \in
    \hat{G}} \PolyTors{M}{(\tensor{\varphi}{\rho}) \otimes \xi}
  $$
  which achieves the proof of Formula~(\ref{eqn:covering_formula}).
\end{proof}
\bibliographystyle{amsalpha}
\bibliography{ref}

\end{document}